\newtheorem{theorem}{Theorem}[section]
\newtheorem{proposition}[theorem]{Proposition}
\newtheorem{lemma}[theorem]{Lemma}
\newtheorem{corollary}[theorem]{Corollary}
\theoremstyle{definition}
\newtheorem{definition}[theorem]{Definition}
\newtheorem{example}[theorem]{Example}
\newtheorem{remark}[theorem]{Remark}
\numberwithin{equation}{section}
\newcommand{\N}{\mathbb{N}}                        
\newcommand{\C}{\mathbb{C}}                        
\newcommand{\vp}{\varphi}                          
\newcommand{\VV}{\mathcal{V}}                      
\newcommand{\FF}{\mathcal{F}}                      
\newcommand{\OO}{\mathcal{O}}                      
\newcommand{\OXxi}{{\mathcal{O}}_{X,\xi}}          
\newcommand{\OYeta}{{\mathcal{O}}_{Y,\eta}}        
\newcommand{\Xxi}{X_{\xi}}                         
\newcommand{\Yeta}{Y_{\eta}}                       
\newcommand{\Zzeta}{Z_{\zeta}}                     
\newcommand{\Xpn}{X^{\{n\}}}                       
\newcommand{\xipn}{{\xi}^{\{n\}}}                  
\newcommand{\vpxi}{\vp_{\xi}}                      
\newcommand{\vpn}{\vp^{\{n\}}}                     
\newcommand{\antens}{\tilde{\otimes}}              
\newcommand{\tensR}{\tilde{\otimes}_R}             
\newcommand{\FpnR}{F^{\tilde{\otimes}^n_R}}        
\newcommand{\pp}{\mathfrak{p}}                     
\newcommand{\Ann}{\mathrm{Ann}}                    
\newcommand{\Spec}{\mathrm{Spec}\,}                
\newcommand{\Specan}{\mathrm{Specan}\,}            
\begin{document}

\title{Flatness testing over singular bases}

\author{Janusz Adamus}
\address{Department of Mathematics, The University of Western Ontario, London, Ontario, Canada N6A 5B7
         -- and -- Institute of Mathematics, Faculty of Mathematics and Computer Science,
         Jagiellonian University, ul. {\L}ojasiewicza 6, 30-348 Krak{\'o}w, Poland}
\email{jadamus@uwo.ca}
\author{Hadi Seyedinejad}
\address{Department of Mathematics, The University of Western Ontario, London, Ontario, Canada N6A 5B7}
\email{sseyedin@uwo.ca}
\thanks{J. Adamus's research was partially supported by Natural Sciences and Engineering 
Research Council of Canada Discovery Grant 355418-2008 and Polish Ministry of 
Science Discovery Grant N N201 540538}

\keywords{flat, torsion-free, fibred power, tensor power, vertical component, vertical element, analytic tensor product}

\begin{abstract}
We show that non-flatness of a morphism $\vp:X\to Y$ of complex-analytic spaces with a locally irreducible target of dimension $n$ manifests in the existence of vertical components in the $n$-fold fibred power of the pull-back of $\vp$ to the desingularization of $Y$.
An algebraic analogue follows: Let $R$ be a locally (analytically) irreducible finite type $\C$-algebra and an integral domain of Krull dimension $n$, and let $S$ be a regular $n$-dimensional algebra of finite type over $R$ (but not necessarily a finite $R$-module), such that $\Spec S\to\Spec R$ is dominant. Then a finite type $R$-algebra $A$ is $R$-flat if and only if $(A^{\otimes^n_R})\otimes_RS$ is a torsion-free $R$-module.
\end{abstract}
\maketitle

{\hfill\textit{Dedicated to Edward Bierstone and Pierre D. Milman\ }}

{\hfill\textit{for their 65'th birthdays.}}
\bigskip

\section{Introduction and main results}
\label{sec:intro}

This note is concerned with the algebraic notion of flatness. The past few years have seen a significant progress in the attempts to understand flatness of an (analytic or algebraic) morphism in explicit geometric terms. Following the work of Galligo and Kwieci{\'n}ski \cite{GK}, Adamus, Bierstone and Milman \cite{ABM1} characterized flatness of a morphism $\vp:X\to Y$ of complex-analytic spaces in terms of the so-called vertical components (see~\ref{subsec:vertical}) in fibred powers of $\vp$. An analogous result in the algebraic category was later obtained by Avramov and Iyengar \cite{AI}. Both theorems assume smoothness of the target $Y$, as the arguments, inspired by Auslander's~\cite{Au}, rely on homological properties of modules over regular local rings. 

On the other hand, very little is known in this context in case of singular targets. The best result we know of works only for finite maps over irreducible plane curves (see~\cite{HW}).

Our goal in the present paper is to generalize the results of \cite{ABM1} to the singular setting. The following is a special case of our main result (Theorem~\ref{thm:main} and Corollary~\ref{cor:main} below).

\begin{theorem}
\label{thm:1}
Let $\vp:X\to Y$ be a morphism of complex-analytic spaces, where $Y$ is locally irreducible. Let $\xi\in X$. Let $\Xpn$ denote the $n$-fold fibred power of $X$ over $Y$, where $n=\dim Y$, and let $\xipn\in\Xpn$ be the diagonal point corresponding to $\xi$. Finally, let $\sigma_\zeta:\Zzeta\to\Yeta$ be any dominant morphism of complex-analytic space germs with $\Zzeta$ smooth of dimension $n$ (e.g., $\Zzeta$ can be a desingularization of $\Yeta$), where $\eta=\vp(\xi)$. Then $\vp$ is not flat at $\xi$ if and only if the fibred product $\Xpn\times_YZ$ has a vertical component at $(\xipn,\zeta)$; i.e., a local irreducible component (perhaps embedded) of $\Xpn\times_YZ$ at $(\xipn,\zeta)$ whose image by the canonical map $\Xpn\times_YZ\to Y$ is nowhere-dense in $Y$.
\end{theorem}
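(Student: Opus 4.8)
The plan is to reduce the statement to the known smooth-target case of Adamus--Bierstone--Milman \cite{ABM1} (and its algebraic counterpart \cite{AI}) by base-changing $\vp$ along the resolution $\sigma_\zeta$. Write $R=\OYeta$, $A=\OXxi$ and $S=\OO_{Z,\zeta}$. Since $Y$ is locally irreducible and $\sigma_\zeta$ is dominant, the induced local homomorphism $R\to S$ is injective, $S$ is regular of dimension $n=\dim R$, and $\mathrm{Frac}(S)/\mathrm{Frac}(R)$ is a finite field extension. Set $X'=X\times_YZ$ with projection $\vp':X'\to Z$, the pull-back of $\vp$ to the smooth germ $\Zzeta$; note that $(\xi,\zeta)$ is a point of $X'$ because $\vp(\xi)=\eta=\sigma_\zeta(\zeta)$.

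First I would record the base-change identity for fibred powers: there is a canonical isomorphism $\Xpn\times_YZ\cong (X')^{\{n\}}_{/Z}$, the $n$-fold fibred power of $\vp'$ over $Z$, reflected on local rings by $\left(A^{\tilde{\otimes}^n_R}\right)\tensR S\cong (A\tensR S)^{\tilde{\otimes}^n_S}$. Next I would show that verticality over $Y$ and over $Z$ agree at the point in question: because $\sigma_\zeta$ is dominant between germs of the same dimension it is generically finite, so a local component of $\Xpn\times_YZ$ has nowhere-dense image in $Y$ if and only if it has nowhere-dense image in $Z$. Equivalently, for the finitely generated $S$-module $M=(A\tensR S)^{\tilde{\otimes}^n_S}$, a minimal-polynomial (norm) argument over the finite extension $\mathrm{Frac}(S)/\mathrm{Frac}(R)$ shows that $M$ is $R$-torsion-free if and only if it is $S$-torsion-free. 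Applying the smooth-target theorem to $\vp'$ over the regular base $S$ then yields the chain: $\Xpn\times_YZ$ has a vertical component at $(\xipn,\zeta)$ $\iff$ $M$ has $S$-torsion $\iff$ $\vp'$ is not flat at $(\xi,\zeta)$.

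It therefore remains to compare the two flatness statements, i.e.\ to prove that $\vp$ is flat at $\xi$ if and only if $\vp'$ is flat at $(\xi,\zeta)$, equivalently that $A$ is $R$-flat if and only if $A\tensR S$ is $S$-flat. The forward implication is simply preservation of flatness under the base change $R\to S$, and via the chain above it already supplies the implication ``vertical component $\Rightarrow$ non-flatness'' in Theorem~\ref{thm:1}. The reverse implication---descending flatness from the resolution down to the singular base, which gives ``non-flatness $\Rightarrow$ vertical component''---is the crux and the step I expect to be the main obstacle, because $\sigma_\zeta$ is typically not flat, so ordinary faithfully-flat descent is unavailable.

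To attack the descent I would pass to completions (here the analytic irreducibility of $Y$ is essential, since it makes $\hat R$ a domain and keeps torsion well-behaved), factor $R\hookrightarrow\bar R\hookrightarrow S$ through the normalization $\bar R$ of $R$ (which is local and finite over $R$, and embeds in the normal ring $S$), and exploit the regularity of $S$ to run an Auslander-type homological argument on the $S$-module $A\tensR S$. The role of analytic irreducibility is precisely to control the torsion, so that a genuine failure of flatness of $A$ over $R$ cannot be washed out after tensoring with $S$; isolating and proving this non-vanishing of torsion after base change is where the real work lies.
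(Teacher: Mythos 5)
Your reduction scheme is exactly the paper's: the base-change identity for fibred powers is the paper's Lemma~\ref{lem:commutativity}, the equivalence of verticality over $Y$ and over $Z$ (via dominance of $\sigma_\zeta$ between irreducible germs of equal dimension) is the closing step of the paper's proof of Theorem~\ref{thm:main}, and the forward implication by flat base change is also as in the paper. But there is a genuine gap at precisely the point you flag as ``the crux'': the descent statement, that non-flatness of $A$ over $R$ survives the base change $R\to S$, is never proved. You only sketch a strategy (completions, factoring through the normalization $\bar R$, ``an Auslander-type homological argument'' exploiting regularity of $S$) and then concede that ``isolating and proving this non-vanishing of torsion after base change is where the real work lies.'' That unproved step is the paper's sole new ingredient (Proposition~\ref{prop:descent}), and the paper's proof of it is not homological at all: it invokes Hironaka's local flattener theorem. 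If $F\tensR S$ were $S$-flat, the universal property of the flattener forces $\Specan S\to\Specan R$ to factor through $\Specan R/P$, where $P$ is the flattener ideal; non-flatness of $F$ gives $P\neq(0)$, and since $R$ is a domain (this is where local irreducibility of $Y$ enters), $\Specan R/P$ is nowhere-dense in $\Specan R$, contradicting dominance. Your proposal would only become a proof if the homological descent argument were actually carried out, and nothing in the sketch indicates how to do that; note in particular that $\sigma_\zeta$ is not finite, so finite-module techniques over $\bar R$ do not obviously apply to $A\tensR S$.

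A secondary, concrete error: your claim that $\mathrm{Frac}(S)/\mathrm{Frac}(R)$ is a \emph{finite} field extension is false for analytic germs. For the blow-up chart $R=\C\{y_1,y_2\}\to S=\C\{u,v\}$, $y_1\mapsto u$, $y_2\mapsto uv$ (the typical desingularization situation covered by the theorem), the element $e^v\in S$ is transcendental over $\mathrm{Frac}(\C\{u,uv\})$: comparing coefficients of powers of $u$ in any algebraic relation forces polynomial identities $\sum_k P_{N,k}(v)e^{kv}=0$, hence all $P_{N,k}=0$. So the ``minimal-polynomial (norm) argument'' for comparing $R$-torsion with $S$-torsion is unavailable. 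This is repairable --- the purely geometric argument you also mention (a dominant morphism of irreducible germs of equal dimension maps a germ to a nowhere-dense subgerm if and only if the germ itself is nowhere-dense), which is what the paper uses, suffices --- but as written that part of your argument rests on a false premise, and the descent gap above is fatal on its own.
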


Throughout the paper, by a \emph{dominant morphism $\sigma_\zeta:\Zzeta\to\Yeta$ of complex-analytic space germs} we mean a germ at $\zeta$ of a holomorphic mapping $\sigma:Z\to Y$ such that $\sigma(\zeta)=\eta$ and, for an arbitrarily small open neighbourhood $U$ of $\zeta$ in $Z$, we have $\dim\sigma(U)=\dim\Yeta$.

\begin{remark}
\label{rem:generalization}
Note that, according to Theorem~\ref{thm:1}, $(\Xpn\times_YZ)_{(\xipn\!,\zeta)}$ has no vertical components for \emph{some} map germ $\sigma_\zeta:\Zzeta\to\Yeta$ satisfying the hypotheses of the theorem if and only if it has no vertical components for \emph{all} such map germs.
In particular, if $Y$ is smooth then, to investigate flatness of $\vp_\xi$, we can choose $Z=Y$ and $\sigma=\mathrm{id}_Y$. Then $\Xpn\times_YZ$ can be identified with $\Xpn$, and so in this case Theorem~\ref{thm:1} specializes to \cite[Thm.\,1.1]{ABM1}.
\end{remark}
\medskip

Theorem~\ref{thm:main} and Corollary~\ref{cor:main} provide a more general criterion for $\OYeta$-flatness of a finitely generated $\OXxi$-module. Before formulating these results, we will recall the notion of verticality in complex-analytic modules.

\subsection{Background: vertical elements and analytic tensor product}
\label{subsec:vertical}

Let $\vp:X\to Y$ be a holomorphic mapping of complex-analytic spaces. Let $\xi\in X$, $\eta=\vp(\xi)\in Y$, and let $\vpxi:\Xxi\to\Yeta$ be the induced morphism of germs. Let $\Sigma_{\xi}$ be an irreducible component (isolated or embedded) of $\Xxi$; i.e., the zero-set germ of an associated prime (minimal or not) of the zero ideal in the local ring $\OXxi$. We will say that $\Sigma_{\xi}$ is a \emph{geometric vertical component} with respect to $\vpxi$, when a sufficiently small representative of $\Sigma_{\xi}$ is mapped by a representative of $\vpxi$ into a nowhere-dense (with respect to the strong topology) subset of a representative of $\Yeta$.

\begin{remark}
\label{rem:well-defined}
One also defines another kind of vertical components, called \emph{algebraic vertical} (see, e.g., \cite{ABM1}). We will however not consider them at all in the present paper, and hence will simply use the term \emph{vertical components} when refering to the geometric vertical ones defined above.
\end{remark}

Since the set of zero-divisors of the ring $\OXxi$ coincides with the union of its associated primes, it follows that $\vpxi:\Xxi\to\Yeta$ has a vertical component if and only if there exists a nonzero element $m\in\OXxi$ such that (a sufficiently small representative of) the zero-set germ $\VV(\Ann_{\OXxi}(m))$ of the annihilator of $m$ in $\OXxi$ is mapped by (a representative of) $\vpxi$ to a nowhere-dense subset of (a representative of) $\Yeta$. We can thus extend the notion of vertical component to a finitely generated $\OXxi$-module $F$:

\begin{definition}
\label{def:vert-element}
Let $I:=\Ann_{\OXxi}(F)$ and let $Z_{\xi}$ be the germ of a complex analytic subspace of $X$, defined by $\OO_{Z,\xi}:=\OXxi/I$.
We say that $F$ has a \emph{vertical component} over $Y_{\eta}$ (or over $\OYeta$), when $Z_{\xi}$ has a vertical component over $Y_{\eta}$ in the above sense; equivalently, there exists a nonzero $m\in F$ such that the $\VV(\Ann_{\OXxi}(m))$ is mapped to a nowhere-dense subgerm of $Y_{\eta}$. We will call such $m$ a \emph{geometric vertical element} (or simply a \emph{vertical element}) of $F$ over $Y_{\eta}$ (or over $\OYeta$).
\end{definition}

Vertical elements are geometric analogues of zero-divisors in commutative algebra (see~\cite[\S\,1]{ABM1} for details).

\begin{remark}
\label{rem:vert-equivalence}
In the special case that $F = \OXxi$, $X_\xi$ has no vertical components over $Y_\eta$ if and only if $\OXxi$ (as an $\OXxi$-module) has no vertical elements over $\OYeta$.
\end{remark}

Now let $R$ denote a local analytic $\C$-algebra, that is, a quotient ring $\C\{y\}/J = \C\{y_1,\dots,y_n\}/J$, for some $n\in\N$ and an ideal $J$ in $\C\{y\}$ (the ring of convergent power series in $n$ complex variables). 
By a \emph{local analytic $R$-algebra} we mean a ring of the form $R\{x\}/I:=\C\{y,x\}/I$, with the canonical homomorphism $R \to R\{x\}/I$,
where $I$ is an ideal in $\C\{y,x\} = \C\{y_1,\dots,y_n,x_1,\dots,x_m\}$ containing $J\cdot\C\{x,y\}$. Let $F$ denote an $R$-module.
Following \cite{GR}, we call $F$ an \emph{analytic $R$-module}\footnote{Such $F$ is also called an \emph{almost finitely generated $R$-module}, after \cite{GK}.} when $F$ is a finitely generated $A$-module, for some local analytic $R$-algebra $A$.
In this case, there is a morphism of germs of complex-analytic spaces $\vpxi:\Xxi\to\Yeta$ such that $R \cong \OYeta$, $A \cong \OXxi$, 
$R \to A$ is the induced pull-back homomorphism $\vp^*_{\xi}: \OYeta \to \OXxi$, and $F$ is a finitely generated $\OXxi$-module. We say that a nonzero element $m\in F$ is \emph{vertical} over $R$ if $m$ is vertical over $\OYeta$ in the sense of Definition~\ref{def:vert-element}.
It is easy to see that the notion of vertical element is well-defined; i.e., independent of the choice of a witness $A$ for $F$ (cf. \cite{GK}).
\medskip

In the category of local analytic $R$-algebras with local $R$-algebra homomorphisms, the coproduct of two objects $A=R\{x\}/I$, $B=R\{x'\}/I'$ exists, called the \emph{analytic tensor product} over $R$, and is denoted by $A\tensR B$. This can be shown to be isomorphic to $\frac{R\{x,x'\}}{I+I'}$ (cf. \cite{GR}).
Given two analytic $R$-modules $F_1$ and $F_2$, witnessed by $A_1$ and $A_2$ respectively, one defines their analytic tensor product over $R$ as
\[
F_1\tensR F_2:=\left((F_1\otimes_{A_1}(A_1\tensR A_2)\right)\otimes_{A_1\tensR A_2}\left((A_1\tensR A_2)\otimes_{A_2}F_2\right)\,.
\]

Recall that every local analytic $\C$-algebra $R$ corresponds to a unique complex-analytic space germ denoted $\Specan R$.
The duality between the categories of complex-analytic germs and local analytic $\C$-algebras (see \cite{F}) implies the following important isomorphism.
Suppose that $\vp_1:X_1\to Y$ and $\vp_2:X_2\to Y$ are holomorphic mappings of analytic spaces, with $\vp_1(\xi_1)=\vp_2(\xi_2)=\eta$. Then the local rings $\OO_{X_i,\xi_i}$ ($i=1,2$) are $\OYeta$-modules and, by the uniqueness of fibred product and of analytic tensor product, the local ring $\OO_{Z,(\xi_1,\xi_2)}$ of the fibred product $Z=X_1\times_YX_2$ at $(\xi_1,\xi_2)$ is canonically isomorphic to $\OO_{X_1,\xi_1}\antens_{\OYeta}\OO_{X_2,\xi_2}$.
Therefore, given a holomorphic germ $\vp_{\xi}:X_{\xi}\to Y_{\eta}$, we can identify the $d$-fold analytic tensor power $\OXxi^{{\antens}^d_{\OYeta}}=\OXxi\antens_{\OYeta}\dots\antens_{\OYeta}\OXxi$ with the local ring of the $d$-fold fibred power $\OO_{X^{\{d\}},\xi^{\{d\}}}$, for $d\geq1$.

\subsection{Main results}
Our main theorem is the following flatness criterion.

\begin{theorem}
\label{thm:main}
Let $R$ be a local analytic $\C$-algebra and an integral domain of dimension $n$. Let $F$ be an analytic $R$-module, and let $S$ be any local analytic $R$-algebra which is regular, $n$-dimensional, and such that the induced morphism of complex-analytic space germs $\Specan S\to\Specan R$ is dominant. Then $F$ is $R$-flat if and only if the analytic tensor product
\[
\underbrace{F\tensR\dots\tensR F}_{n\ \mathrm{times}}\tensR S
\]
has no vertical elements over $R$ (equivalently, over $S$).
\end{theorem}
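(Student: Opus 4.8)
The plan is to reduce the singular-base statement to the smooth-base criterion of \cite{ABM1} by pulling $F$ back along the dominant regular germ $\Specan S\to\Specan R$. Two facts drive the reduction. First, iterating the elementary base-change isomorphism $(M\tensR S)\antens_S(N\tensR S)\cong(M\tensR N)\tensR S$ yields a canonical identification
\[
\FpnR\tensR S\;\cong\;\bigl(F\tensR S\bigr)^{\antens^n_S},
\]
so the module appearing in the statement is nothing but the $n$-fold analytic tensor power, over the \emph{regular} ring $S$, of the pulled-back module $F\tensR S$. Second, because $S$ is regular of dimension $n=\dim R$, the smooth-base theorem of \cite{ABM1} applies directly to the analytic $S$-module $F\tensR S$: it is $S$-flat if and only if its $n$-fold analytic tensor power $\bigl(F\tensR S\bigr)^{\antens^n_S}$ has no vertical elements over $S$.

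Combining these two facts with a short lemma to the effect that, under dominance and the equality $\dim S=\dim R$, nowhere-density in $\Specan S$ and in $\Specan R$ correspond — so that $\FpnR\tensR S$ has no vertical elements over $S$ precisely when it has none over $R$ — the whole theorem collapses to the single equivalence
\[
F\ \text{is}\ R\text{-flat}\quad\Longleftrightarrow\quad F\tensR S\ \text{is}\ S\text{-flat}.
\]
The forward implication is flat base change: if $F$ is $R$-flat then $F\tensR S$ is $S$-flat, the analytic tensor product being stable under base change and preserving flatness. Hence everything rests on the reverse implication.

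The descent $F\tensR S\ S\text{-flat}\Rightarrow F\ R\text{-flat}$ is the step I expect to be the main obstacle. The difficulty is that $\sigma$ is typically of resolution or blow-up type: one may place $\zeta$ on the exceptional locus, where $S$ is not a finite $R$-module and $\sigma_\zeta$ is neither finite nor flat, so ordinary faithfully flat descent is unavailable. To circumvent this I would argue the contrapositive through torsion rather than attempt abstract descent. By generic flatness the non-flat locus of $F$ over $R$ is a nowhere-dense analytic subgerm $\Sigma\subset\Specan R$; the task is to show that if $F$ is non-flat at the closed point, then this non-flatness survives the passage to the $n$-fold fibred power and the dominant pull-back, producing a nonzero element of $\FpnR\tensR S$ whose annihilator has image inside $\Sigma$ — that is, a vertical element. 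The plan has three ingredients: (a) identify $\FpnR\tensR S$ with the module of the fibred product $\Xpn\times_Y Z$ at $(\xipn,\zeta)$, giving vertical elements their geometric meaning; (b) use regularity of $S$ to invoke the Auslander-type rigidity behind \cite{ABM1}, turning non-freeness of $F\tensR S$ over $S$ into genuine $S$-torsion of the $n$-fold power; and (c) use dominance together with $\dim S=\dim R$ to certify that such torsion reflects $R$-level non-flatness of $F$ rather than being an artifact of the non-flat map $\sigma$, so that its support is nowhere-dense over $Y$. Step (c) is the crux, and it is precisely where the hypotheses that $\sigma$ be dominant and dimension-preserving are indispensable.
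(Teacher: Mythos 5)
Your reduction scaffolding coincides with the paper's: the commutation isomorphism $\FpnR\tensR S\cong(F\tensR S)^{\antens_S^n}$ (Lemma~\ref{lem:commutativity}), the application of the smooth-base criterion of \cite{ABM1} to the analytic $S$-module $F\tensR S$, the equivalence of verticality over $R$ and over $S$ via dominance and equidimensionality, and flat base change for the easy direction. All of that is fine. But the one step carrying the paper's actual new content --- the descent ``$F\tensR S$ is $S$-flat $\Rightarrow$ $F$ is $R$-flat'' --- is not proved in your proposal, and the proposed circumvention is circular. Your ingredient (b) takes as input ``non-freeness of $F\tensR S$ over $S$'' and converts it into torsion of the $n$-fold power; but non-flatness of $F\tensR S$ over $S$ is precisely what has to be deduced from non-flatness of $F$ over $R$, and no mechanism for that deduction is ever supplied. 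Your appeal to generic flatness does not provide one: the obstruction is that flatness can be \emph{created} by base change --- the pullback of a non-flat module along a non-flat map such as a resolution can perfectly well be flat --- so knowing that $F$ is non-flat at the closed point, or that some ``non-flat locus'' in $\Specan R$ is nowhere-dense, says nothing by itself about $F\tensR S$ at $\zeta$. Relatedly, you call step (c) the crux; in fact (c) is the easy part: a dominant morphism of irreducible germs of equal dimension sends nowhere-dense subgerms to nowhere-dense subgerms and conversely, which the paper settles in a few lines at the end of Section~\ref{subsec:proof-main}.

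The paper closes exactly this gap with Hironaka's local flattener theorem (Proposition~\ref{prop:descent}). Given a non-flat analytic $R$-module $F$, the flattener is a nonzero ideal $P$ in $R$ with the \emph{universal} property that every morphism of germs $T\to\Specan R$ for which $\OO_T\tensR F$ is $\OO_T$-flat factors through $\Specan R/P$. Since $R$ is an integral domain and $P\neq(0)$, the subgerm $\Specan R/P$ is nowhere-dense; a dominant $\Specan S\to\Specan R$ cannot factor through it, so $F\tensR S$ cannot be $S$-flat. It is the universal property --- not the mere existence of a nowhere-dense bad set --- that makes the argument work, and it is there that the hypothesis that $R$ is a domain does essential work. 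To complete your proof you must either invoke the flattener (or an equivalent universal flattening statement) at this point, or find a genuinely different argument for the descent; as written, the proposal restates the descent problem without solving it.
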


\begin{remark}
\label{rem:desing-model}
Note that an $R$-algebra $S$ with the above properties always exists. One can, for instance, take $S$ to be the local ring of a desingularization of (a sufficiently small representative of) $\Specan R$ (cf.~\cite{BM2}).
\end{remark}

\begin{corollary}
\label{cor:main}
Let $\vp:X\to Y$ be a morphism of complex-analytic spaces, where $Y$ is locally irreducible, and let $\FF$ be a coherent $\OO_X$-module. Let $\xi\in X$. Let $\Xpn$ denote the $n$-fold fibred power of $X$ over $Y$, where $n=\dim Y$, and let $\xipn\in\Xpn$ be the diagonal point corresponding to $\xi$. Finally, let $\sigma_\zeta:\Zzeta\to\Yeta$ be any dominant morphism of complex-analytic space germs with $\Zzeta$ smooth of dimension $n$, where $\eta=\vp(\xi)$. Then $\FF_\xi$ is a flat $\OYeta$-module if and only if the analytic tensor product
\[
\underbrace{\FF_\xi\antens_{\OYeta}\dots\antens_{\OYeta}\FF_\xi}_{n\ \mathrm{times}}\antens_{\OYeta}\OO_{Z,\zeta}
\]
has no vertical elements over $\OYeta$ (equivalently, over $\OO_{Z,\zeta}$).
\end{corollary}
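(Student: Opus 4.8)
The plan is to deduce Corollary~\ref{cor:main} directly from Theorem~\ref{thm:main}, via the standard dictionary between the geometric data $(\vp,\FF,\sigma_\zeta)$ and the algebraic data $(R,F,S)$ supplied by the duality between complex-analytic germs and local analytic $\C$-algebras recalled in \S\ref{subsec:vertical}. Concretely, I would set $R:=\OYeta$, $F:=\FF_\xi$, and $S:=\OO_{Z,\zeta}$, and then check that each hypothesis of Theorem~\ref{thm:main} holds for this choice, so that its conclusion translates back into the asserted flatness criterion.

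First I would verify the hypotheses on $R$ and $F$. Since $Y$ is locally irreducible at $\eta$, the local ring $R=\OYeta$ is an integral domain, and $\dim R=\dim_\eta Y=n$; it is a local analytic $\C$-algebra by construction. Coherence of $\FF$ guarantees that $F=\FF_\xi$ is a finitely generated $\OXxi$-module, so, taking $A:=\OXxi$ together with the pull-back homomorphism $\vp^*_\xi:R\to A$, the module $F$ is an analytic $R$-module witnessed by $A$. Finally, $R$-flatness of $F$ is by definition the same as flatness of $\FF_\xi$ over $\OYeta$.

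Next I would verify the hypotheses on $S$. As $\Zzeta$ is smooth of dimension $n$, its local ring $S=\OO_{Z,\zeta}$ is a regular $n$-dimensional local analytic $\C$-algebra. The germ $\sigma_\zeta$ induces a pull-back homomorphism $\sigma^*_\zeta:R\to S$, making $S$ a local analytic $R$-algebra, and under the duality of \S\ref{subsec:vertical} the induced morphism $\Specan S\to\Specan R$ is precisely the germ $\sigma_\zeta$. Thus the dominance hypothesis of the corollary translates verbatim into the dominance of $\Specan S\to\Specan R$ required by Theorem~\ref{thm:main}; the two notions of dominance coincide, since for a map of germs with irreducible target dominance is equivalent to injectivity of the pull-back on local rings. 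With all hypotheses in place, Theorem~\ref{thm:main} applies directly and yields that $F$ is $R$-flat if and only if $\underbrace{F\tensR\dots\tensR F}_{n}\tensR S$ has no vertical elements over $R$ (equivalently over $S$); substituting back $R=\OYeta$, $F=\FF_\xi$, $S=\OO_{Z,\zeta}$ gives exactly the stated criterion.

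Since the genuine mathematical content sits in Theorem~\ref{thm:main}, the corollary is essentially a translation, and the only care required is bookkeeping. The most delicate points to confirm are the coincidence of the geometric and algebraic dominance conditions noted above, the well-definedness of ``vertical element over $R$'' (its independence of the witness $A$, already recorded in \S\ref{subsec:vertical}), and the internal consistency of the statement with the notation $\Xpn,\xipn$: when $\FF=\OO_X$ one invokes the canonical isomorphism $\OXxi^{\antens^n_{\OYeta}}\cong\OO_{\Xpn,\xipn}$ to identify the $n$-fold analytic tensor power with the local ring of the $n$-fold fibred power. As all of these are supplied by the Background material, the corollary follows.
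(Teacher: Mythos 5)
Your proposal is correct and matches the paper's treatment: the paper offers no separate proof of Corollary~\ref{cor:main}, precisely because it is the immediate geometric restatement of Theorem~\ref{thm:main} under the germ/algebra duality with $R=\OYeta$, $F=\FF_\xi$, $S=\OO_{Z,\zeta}$, which is exactly the translation you carry out. The bookkeeping you supply (integrality and dimension of $\OYeta$ from local irreducibility, finite generation of $\FF_\xi$ from coherence, regularity of $\OO_{Z,\zeta}$ from smoothness of $\Zzeta$, and the identification of the two dominance hypotheses via $\Specan\OO_{Z,\zeta}\cong\Zzeta$) is the intended, routine content.
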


Now, Theorem~\ref{thm:1} follows from Corollary~\ref{cor:main} with $\FF=\OO_X$, according to Remark~\ref{rem:vert-equivalence} and the canonical isomorphism $\OXxi\antens_{\OYeta}\dots\antens_{\OYeta}\OXxi \cong \OO_{\Xpn,\xipn}$.

In case when the source $X$ is smooth, of the same dimension as $Y$, and the map germ $\vpxi$ is dominant, Theorem~\ref{thm:1} has a particularly pleasing form:

\begin{corollary}
\label{cor:X-smooth}
Let $\vpxi:\Xxi\to\Yeta$ be a dominant morphism of complex-analytic spaces germs, where $\Yeta$ is irreducible, $\Xxi$ is smooth, and $\dim\Xxi=\dim\Yeta=n$.
Then $\vpxi$ is flat if and only if the induced morphism $\vp^{\{n+1\}}_{\xi^{\{n+1\}}}:X^{\{n+1\}}_{\xi^{\{n+1\}}}\to\Yeta$ has no vertical components over $\Yeta$.
\end{corollary}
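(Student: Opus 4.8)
The plan is to deduce this from Theorem~\ref{thm:1} by exploiting the freedom in the choice of the auxiliary smooth model $\Zzeta$. The key observation is that, under the present hypotheses, the source germ $\Xxi$ can itself play the role of such a model: it is smooth of dimension $n=\dim\Yeta$, and the map germ $\vpxi:\Xxi\to\Yeta$ is dominant by assumption. A smooth germ of dimension $n$ is automatically irreducible, its local ring being regular and hence an integral domain, so no further verification is needed on that account. Thus, in the notation of Theorem~\ref{thm:1}, I would take $Z=X$, $\sigma=\vp$, and base point $\zeta=\xi$ (which satisfies $\sigma(\zeta)=\vp(\xi)=\eta$).

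With this choice, the fibred product appearing in Theorem~\ref{thm:1} becomes $\Xpn\times_YX$. By associativity and commutativity of the fibred product over $Y$, this is canonically isomorphic to the $(n+1)$-fold fibred power $X^{\{n+1\}}$, and the base point $(\xipn,\zeta)$ is carried to the diagonal point $\xi^{\{n+1\}}$. Under this identification the canonical projection $\Xpn\times_YX\to Y$ corresponds to the induced morphism $\vp^{\{n+1\}}$, so that the notion of vertical component with respect to the canonical map matches the notion of vertical component with respect to $\vp^{\{n+1\}}$.

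Theorem~\ref{thm:1} then asserts that $\vpxi$ fails to be flat at $\xi$ precisely when $X^{\{n+1\}}$ has a vertical component at $\xi^{\{n+1\}}$. Negating both sides, and invoking Remark~\ref{rem:vert-equivalence} to pass between vertical components of the germ $X^{\{n+1\}}_{\xi^{\{n+1\}}}$ and the formulation in terms of vertical components over $\Yeta$, yields exactly the stated equivalence: $\vpxi$ is flat if and only if $\vp^{\{n+1\}}_{\xi^{\{n+1\}}}$ has no vertical components over $\Yeta$.

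The only points requiring care are the identification $\Xpn\times_YX\cong X^{\{n+1\}}$ together with the correct matching of base points, and the confirmation that $\Xxi$ is an admissible model $\Zzeta$. Both are essentially bookkeeping, and so I do not anticipate any serious obstacle beyond tracking the base points and the canonical maps to $Y$ accurately; the substance of the corollary is already carried by Theorem~\ref{thm:1}.
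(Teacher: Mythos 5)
Your proposal is correct and is precisely the paper's own argument: the authors' proof consists of the single instruction to take $\Zzeta:=\Xxi$ and $\sigma_\zeta:=\vpxi$ in Theorem~\ref{thm:1}, and your identification of $\Xpn\times_YX$ with $X^{\{n+1\}}$ (matching the diagonal base points) is exactly the bookkeeping that instruction implicitly requires.
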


\begin{proof}
In Theorem~\ref{thm:1}, take $\Zzeta:=\Xxi$ and $\sigma_\zeta:=\vpxi$.
\end{proof}
\medskip

We derive Theorem~\ref{thm:main} from \cite[Thm.\,1.9]{ABM1} in Section~\ref{sec:proof-main}, via an analytic flatness descent (see below).
Section~\ref{sec:alg} contains an algebraic version of our flatness criterion (Theorem~\ref{thm:main-alg}) as well as Example~\ref{ex:sharp} proving sharpness of Theorem~\ref{thm:main}.

\section{Analytic flatness descent}
\label{sec:descent}

By definition, a ring homomorphism $R\to S$ descends flatness if, for any $R$-module $F$, flatness of $F\otimes_RS$ (as an $S$-module) implies flatness of $F$ (as an $R$-module). In general, flatness descent is a rare luxury. However, as we show below, it does hold for analytic modules over integral domains and local analytic $\C$-algebra homomorphisms inducing dominant morphisms of analytic space germs.

We will say that a homomorphism $R\to S$ of local analytic $\C$-algebras \emph{analytically descends flatness} if $S$-flatness of $F\tensR S$ implies $R$-flatness of $F$ for every analytic $R$-module $F$.

\begin{proposition}
\label{prop:descent}
Let $\kappa:R\to S$ be a homomorphism of local analytic $\C$-algebras, where $R$ is an integral domain. If the induced morphism $\Specan S\to\Specan R$ of complex-analytic space germs is dominant, then $\kappa$ analytically descends flatness.
\end{proposition}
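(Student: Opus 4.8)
The plan is to reduce, via the local criterion of flatness, to a single Tor-vanishing statement, and then to force that vanishing using dominance through the generic fibre. Throughout write $k=\C$ for the common residue field of $R$ and $S$, and recall that an analytic $R$-module $F$ is finitely generated over some local analytic $R$-algebra $A$; by the local criterion of flatness for a finitely generated module over a local $R$-algebra, $F$ is $R$-flat if and only if $\operatorname{Tor}_1^R(k,F)=0$, and likewise $G:=F\tensR S$ is $S$-flat if and only if $\operatorname{Tor}_1^S(k,G)=0$. First I would record that dominance forces $\kappa$ to be injective: since $R$ is a domain, $\Specan R$ is reduced and irreducible, so the only element of $R$ vanishing on the dense image $\sigma(\Specan S)$ is $0$; moreover the generic fibre is non-empty, i.e. $S\otimes_R\operatorname{Frac}(R)\neq 0$.

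Next I would factor $\kappa$ through the free power-series extension. Writing $S=R\{z\}/K$, set $P:=R\{z\}$ and $N:=F\tensR R\{z\}$, a finitely generated module over the local analytic $P$-algebra $A\tensR R\{z\}$, with $G=N/KN$. Since $R\to P$ is faithfully flat (adjoining analytic coordinates), $F$ is $R$-flat if and only if $N$ is $P$-flat. This replaces the injective $\kappa$ by a surjection $P\twoheadrightarrow S=P/K$, for which the Cartan--Eilenberg change-of-rings spectral sequence
\[
E^2_{p,q}=\operatorname{Tor}^S_p\!\bigl(k,\operatorname{Tor}^P_q(S,N)\bigr)\ \Longrightarrow\ \operatorname{Tor}^P_{p+q}(k,N)
\]
is available. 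Feeding in the hypothesis that $G=\operatorname{Tor}^P_0(S,N)$ is $S$-flat kills every $E^2_{p,0}$ with $p>0$, so the edge map yields $\operatorname{Tor}^P_1(k,N)\cong T/\mathfrak m_S T$, where $T:=\operatorname{Tor}^P_1(S,N)$. By Nakayama ($T$ is finitely generated over the Noetherian local ring $S$) together with the local criterion, I obtain the clean reformulation: under the standing hypothesis that $G$ is $S$-flat, $F$ is $R$-flat if and only if $T=0$.

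It then remains to prove $T=0$, and this is where dominance enters decisively. Because at the generic point of $R$ the ring $\operatorname{Frac}(R)$ is a field, the generic fibre of $N$ is flat over the corresponding fibre of $P$; consequently $T$ is annihilated after inverting a nonzero element of $R$, hence is supported over a proper analytic subgerm $W\subsetneq\Specan R$. Since $\sigma$ is dominant, every associated prime of the finitely generated $S$-module $T$ then lies over $W$, i.e. maps into a nowhere-dense subgerm of $\Specan R$; in the language of this paper, each such prime is vertical. The inclusion $T\hookrightarrow K\otimes_P N$ arising from $0\to K\to P\to S\to0$ is meant to locate these associated primes among those of $N$ and of $G=N/KN$.

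The main obstacle is precisely this final vanishing: one must show that the $S$-flatness of $G$ is incompatible with the existence of such a vertical associated prime of $T$. I expect to close it by an associated-prime analysis, using that the associated primes of a module flat over $S$ are governed by $\operatorname{Ass}(S)$, while dominance confines the vertical primes of $S$ to the non-dominating components of $\Specan S$; a nonzero, properly supported $T\hookrightarrow K\otimes_P N$ would then produce a vertical associated prime of $G$, or an obstruction to the flatness of $G$ along a dominating component, contradicting the hypothesis. Controlling $\operatorname{Ass}_S(T)$ through the conormal sequence when $\Specan S$ is singular is the technical heart of the argument; it simplifies considerably in the setting of Theorem~\ref{thm:main}, where $S$ is regular and $K$ may be taken generated by a regular sequence, so that $T$ becomes a Koszul homology module whose vanishing reduces to the $N$-regularity of that sequence.
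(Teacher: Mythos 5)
Your reduction is a genuinely different route from the paper's, and the first half of it is essentially correct: the spectral-sequence argument is just the classical local flatness criterion for the ideal $K\subset P=R\{z\}$, and it validly reformulates the proposition as ``if $G=F\tensR S$ is $S$-flat, then $T=\operatorname{Tor}_1^P(S,N)=0$.'' Two repairs are needed but are harmless: $T$ is not finitely generated over $S$ (it is finitely generated over the local analytic $S$-algebra $A\tensR S$, which is what Nakayama actually requires); and the ``generic fibre'' claim should be run through flat base change --- identify $T$ with an analytic $\operatorname{Tor}_1$ over $R$ and localize at the generic point of the domain $R$ --- which does yield that $T$ is killed by a single nonzero element of $R$, i.e.\ is $R$-torsion.

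The genuine gap is the step you yourself flag as the ``main obstacle'': proving $T=0$. This is not a technical loose end; it carries the entire content of the proposition, and the strategy you sketch cannot close it. Dominance has already been fully spent in showing that $T$ is $R$-torsion, so what remains is to show that $S$-flatness of $G$ is incompatible with a nonzero torsion $T$ --- and your setup has no mechanism to transmit the hypothesis on $G$ to $T$. In the four-term exact sequence $0\to T\to K\otimes_P N\to N\to G\to 0$, the module $T$ is by definition the kernel of $K\otimes_P N\to N$, so it maps to zero in $N$ and admits no map to $G$ at all; the associated primes of $K\otimes_P N$ are not controlled by $\operatorname{Ass}(S)$ or by flatness of $G$, so no ``vertical associated prime of $G$'' can be manufactured this way. (Note the statement is false without dominance --- e.g.\ $S=\C$ gives $G=F/\mathfrak{m}_RF$, always flat --- so any correct completion must use flatness of $G$ in an essentially global way over $\Specan S$, not pointwise bookkeeping.) The paper supplies exactly the missing deep input: Hironaka's local flattener theorem. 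If $F$ is not $R$-flat, there is a unique \emph{nonzero} ideal $\mathfrak{a}\subset R$ such that every germ morphism $T'\to\Specan R$ making $\OO_{T'}\tensR F$ flat factors through $\Specan R/\mathfrak{a}$; since $R$ is a domain, $\Specan R/\mathfrak{a}$ is nowhere dense, so $S$-flatness of $F\tensR S$ forces the image of $\Specan S$ into a nowhere-dense subgerm, contradicting dominance. Your homological reduction would need this theorem (or something of comparable strength, such as flattening by blowing up) to conclude; the Koszul simplification you mention for regular $S$ does not help, because the difficulty is not computing $T$ but relating it to the hypothesis on $G$.
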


\begin{proof}
For a proof by contradiction, suppose the morphism $\Specan S\to\Specan R$ is dominant and there exists a non-flat analytic $R$-module $F$ such that $F\tensR S$ is $S$-flat.
By Hironaka's local flattener theorem (see, e.g., \cite[Thm.\,7.12]{BM}), there exists a unique non-zero(!) ideal $P$ in $R$ such that $F\tensR R/P$ is $R/P$-flat and, for every morphism of analytic space germs $\vp:T\to\Specan R$, if $\OO_T\tensR F$ is $\OO_T$-flat then $\vp$ factors as
\[
T\to\Specan R/P\hookrightarrow\Specan R\,.
\]
Since $R$ is an integral domain, it follows that $\Specan R/P$ (hence also the image of $T$ via $\vp$) is nowhere-dense in $\Specan R$. Taking $T:=\Specan S$, we get a contradiction.
\end{proof}

\section{Proof of the main theorem}
\label{sec:proof-main}

We begin with a simple observation regarding commutativity of the analytic tensor product.
Let $R$ be a local analytic $\C$-algebra, let $A$ and $S$ be local analytic $R$-algebras, and let $F$ be a finitely generated $A$-module.
Set $A':=A\tensR S$ and $F':=F\tensR S$.

\begin{lemma}
\label{lem:commutativity}
With the above notation, for every $i\geq1$, we have:
\begin{itemize}
\item[(a)] $A^{\tensR^i}\tensR S\cong A'^{\antens_S^i}$ (as $R$-algebras).
\item[(b)] $F^{\tensR^i}\tensR S\cong F'^{\antens_S^i}$ (as $R$-modules).
\end{itemize}
\end{lemma}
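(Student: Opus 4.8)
The plan is to prove (a) and (b) separately, deriving the module statement (b) from the algebra statement (a) by reducing the module-level analytic tensor product to \emph{ordinary} tensor products over the algebra-level coproducts. Throughout I write $B:=A^{\tensR^i}$ and $B':=A'^{\antens_S^i}$, and I denote by $\iota_j:A\to B$ and $\iota'_j:A'\to B'$ the canonical coproduct inclusions ($1\le j\le i$). Recall from the excerpt that $A\tensR S$ is the coproduct of $A$ and $S$ in the category of local analytic $R$-algebras, and that $A'^{\antens_S^i}$ is the coproduct of $i$ copies of $A'$ in the category of local analytic $S$-algebras.

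For (a) I would argue by universal properties, showing that $B\tensR S$ and $B'$ corepresent the same functor on the category of local analytic $S$-algebras. Fix such an algebra $T$. Because $A'=A\tensR S$ is the coproduct of $A$ and $S$ over $R$, an $S$-algebra homomorphism $A'\to T$ is the same datum as an $R$-algebra homomorphism $A\to T$ (the $S$-component being forced to equal the structure map of $T$); hence $\operatorname{Hom}_{S\text{-alg}}(B',T)\cong\operatorname{Hom}_{S\text{-alg}}(A',T)^i\cong\operatorname{Hom}_{R\text{-alg}}(A,T)^i$. On the other hand $B\tensR S$ is the coproduct of $B$ and $S$ over $R$, so by the same reasoning $\operatorname{Hom}_{S\text{-alg}}(B\tensR S,T)\cong\operatorname{Hom}_{R\text{-alg}}(B,T)\cong\operatorname{Hom}_{R\text{-alg}}(A,T)^i$, the last step by the coproduct property of $B=A^{\tensR^i}$. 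By Yoneda these identifications produce a canonical $S$-algebra isomorphism $\Phi:B\tensR S\xrightarrow{\ \sim\ }B'$, which is in particular an isomorphism of $R$-algebras. Tracking universal elements shows moreover that $\Phi$ is \emph{compatible with the inclusions}: it carries the composite $A\xrightarrow{\iota_j}B\to B\tensR S$ to the composite $A\to A'\xrightarrow{\iota'_j}B'$. This compatibility is exactly what part (b) will require.

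For (b) I would first record that, by iterating the definition of the module-level analytic tensor product and using associativity of the coproduct on witnesses, the $i$-fold power of a finitely generated $A$-module $F$ is the ordinary tensor product over $B$ of the base-changed copies of $F$:
\[
F^{\tensR^i}\ \cong\ (F\otimes_{A,\iota_1}B)\otimes_B\dots\otimes_B(F\otimes_{A,\iota_i}B)\,,
\]
and likewise $(F')^{\antens_S^i}\cong(F'\otimes_{A',\iota'_1}B')\otimes_{B'}\dots\otimes_{B'}(F'\otimes_{A',\iota'_i}B')$, where $F'=F\tensR S\cong F\otimes_A A'$. Now the left-hand side of (b) unwinds, straight from the module definition, to $F^{\tensR^i}\tensR S\cong F^{\tensR^i}\otimes_B(B\tensR S)$, which via $\Phi$ becomes $F^{\tensR^i}\otimes_B B'$. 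Distributing the base change $B\to B'$ over the ordinary tensor product and composing base changes yields $\bigl(F\otimes_{A}B'\bigr)\otimes_{B'}\dots\otimes_{B'}\bigl(F\otimes_{A}B'\bigr)$, where the $j$-th $A$-structure on $B'$ is the one supplied by the compatibility of $\Phi$ with $\iota_j$. The right-hand side of (b) expands to the very same expression once one substitutes $F'\cong F\otimes_A A'$ and applies transitivity of base change $A\to A'\to B'$. Matching the two sides factor by factor establishes the desired $R$-module (indeed $B'$-module) isomorphism.

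The essential point — and the only place where anything beyond formal manipulation enters — is that $\tensR$ and $\antens_S$ are convergent-power-series completions rather than ordinary tensor products, so they cannot be manipulated directly; the argument must route everything through the module definition, which expresses them as ordinary tensor products over the algebra coproducts $B$ and $B'$. Thus part (a) carries all the analytic content, and I expect the main obstacle to be its bookkeeping: verifying that $\Phi$ is compatible with the inclusions $\iota_j,\iota'_j$, so that the two $A$-algebra structures on $B'$ that surface on the two sides of (b) genuinely coincide. Once that compatibility is secured, part (b) is pure base-change algebra, relying only on associativity of $\otimes$ and the identity $(M\otimes_A B)\otimes_B C\cong M\otimes_A C$.
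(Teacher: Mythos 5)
Your proof is correct, but it follows a genuinely different route from the paper's. The paper proves (a) by direct computation with presentations: writing $S=R\{u\}/L$ and $A=R\{x\}/I$, it uses the explicit description $A\tensR B\cong R\{x,x'\}/(I+I')$ of the coproduct to verify the case $i=2$ through a chain of isomorphisms of quotients of convergent power series rings, with general $i$ handled by induction; part (b) is then dispatched in one line by applying (a) to a presentation of $F$ as a finite $A$-module. You instead characterize both sides of (a) by the functor they corepresent on local analytic $S$-algebras and invoke Yoneda, and for (b) you unwind the module-level definition of $\tensR$ into ordinary tensor products over the coproduct algebras $B$ and $B'$ and match factors. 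Each approach buys something. The paper's computation is shorter and entirely concrete, but it leaves implicit precisely the point you isolate: that the isomorphism of (a) respects the canonical coproduct inclusions $\iota_j$, $\iota'_j$ — without this, neither the appeal to a presentation of $F$ nor your factor-by-factor matching is legitimate, since the two sides of (b) a priori carry different $A$-module structures. Your Yoneda construction yields that compatibility automatically, because $\Phi$ is by construction the unique $S$-algebra map whose restriction along the $j$-th inclusion is the composite $A\to A'\to B'$; it also avoids the induction on $i$ and makes transparent that the only analytic input is the paper's assertion that $\antens$ is the coproduct in the category of local analytic algebras, so the argument would transfer verbatim to any setting with such coproducts. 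The price is heavier categorical bookkeeping where the paper has a five-line calculation resting on the explicit power-series model of $\tensR$.
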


\begin{proof}
We will prove the isomorphism (a) for $i=2$. The general case follows easily by induction. Write $S=R\{u\}/L$, $A=R\{x\}/I$, and $R\{x'\}/I'$ for another copy of $A$. Then, by~\ref{subsec:vertical}, we have
\begin{align*}
(A^{\tensR^2})\tensR S & = \left(\frac{R\{x\}}{I}\tensR\frac{R\{x'\}}{I'}\right)\tensR S\\
 & \cong \frac{R\{x,x'\}}{I+I'}\tensR\frac{R\{u\}}{L}\\
 & \cong \frac{R\{x,x',u\}}{I+I'+L}\\
 & \cong\frac{R\{x,u\}}{I+L}\antens_\frac{R\{u\}}{L}\frac{R\{x',u\}}{I'+L}\\
 & =A'^{\antens_S^2}\,.
\end{align*}
For the proof of part (b), apply (a) to a presentation of $F$ as a finite $A$-module.
\end{proof}
\medskip

For reader's convenience, we recall the analytic flatness criterion of \cite{ABM1}, on which the proof of Theorem~\ref{thm:main} is based.

\begin{theorem}[{\cite[Thm.\,1.9]{ABM1}}]
\label{thm:main-ABM1}
Let $S$ be a regular local analytic $\C$-algebra and let $M$ denote an analytic $S$-module. Let $n = \dim S$. Then
$M$ is $S$-flat if and only if the $n$-fold analytic tensor power $M^{\antens_S^n}$ has no vertical elements over $S$.
\end{theorem}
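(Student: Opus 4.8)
The plan is to prove the two implications separately, after first reducing the geometric condition to a purely algebraic one, and the real work will sit in the converse, ``no vertical elements $\Rightarrow$ flat''. The reduction I would set up at the outset exploits that $S$ is a \emph{regular} local analytic domain. Writing $N := M^{\antens_S^n}$ and choosing a witness algebra $A'$ so that $N$ is a finite $A'$-module, I claim a nonzero $m \in N$ is vertical over $S$ precisely when it is $S$-torsion. Indeed, verticality means that $\VV(\Ann_{A'}(m))$ has nowhere-dense image in $\Specan S$; since $S$ is a domain, this image germ is cut out by a \emph{nonzero} ideal of $S$, whose elements pull back to $A'$ and annihilate a power of $m$, so a nonzero element of $S$ kills $m$. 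The converse is immediate. Thus the theorem becomes equivalent to the statement that $M$ is $S$-flat if and only if $M^{\antens_S^n}$ is torsion-free over $S$.

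For the forward direction, suppose $M$ is $S$-flat. Analytic tensor product over $S$ is a base change in the sense of Section~\ref{subsec:vertical}, so it preserves $S$-flatness; hence $N = M^{\antens_S^n}$ is $S$-flat. A flat module over the domain $S$ is torsion-free, so by the dictionary above $N$ has no vertical elements. This direction requires only that flatness propagates through $\antens_S$ and that flatness implies torsion-freeness over a domain.

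The substance is the converse, and here the engine is Auslander's rigidity of $\mathrm{Tor}$ over regular local rings together with the depth formula. Assume $N$ is $S$-torsion-free and aim to show $M$ is flat. I would first invoke the local criterion of flatness, valid since $M$ is finite over the local $S$-algebra $A$: $M$ is $S$-flat if and only if $\mathrm{Tor}_1^S(S/\mathfrak{m}_S, M) = 0$, where $\mathfrak{m}_S$ is the maximal ideal of $S$. The task then is to show that non-vanishing of this single $\mathrm{Tor}$ is incompatible with torsion-freeness of the $n$-fold power. Auslander's theorem gives that, over the regular ring $S$, torsion-freeness of a tensor product of finite modules forces all positive $\mathrm{Tor}$ between the factors to vanish and imposes the depth formula; applying this across the $n$ tensor factors and propagating by rigidity drives the vanishing down toward $\mathrm{Tor}_1^S(S/\mathfrak{m}_S, M)$. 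The count $n = \dim S$ is the natural threshold: since $S$ is regular we have $\mathrm{pd}_S M \le n$, so if $M$ were non-flat the resulting obstruction materializes as a nonzero $S$-torsion class already within $n$ tensor factors, contradicting the torsion-freeness of $N$.

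The hard part will be that $M$ is only \emph{almost} finitely generated over $S$ --- it is finite over $A = S\{x\}/I$ but typically not over $S$ itself --- so Auslander's classical theorem, stated for finite $S$-modules, does not apply verbatim. Bridging this gap is the crux: I would either develop an analytic analogue of the rigidity and depth-formula machinery for analytic $S$-modules, or reduce to the finite case by base change to the closed fibres and to the generic point, checking that torsion-freeness and the relevant $\mathrm{Tor}$-vanishing are preserved under these reductions. A second delicate point is the verticality--torsion dictionary of the first paragraph: one must handle embedded associated primes correctly and verify that ``nowhere-dense image'' is insensitive to the choice of witness algebra $A'$, which is where regularity of $S$ is used essentially and where the precise value $n = \dim S$ must be tracked --- sharpness, that $n$ cannot be lowered, being exactly what Example~\ref{ex:sharp} addresses.
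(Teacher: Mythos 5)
The paper itself offers no proof of this statement --- it is imported verbatim from \cite[Thm.\,1.9]{ABM1} precisely so that its (long, Auslander-style) proof need not be reproduced --- so the comparison must be with the argument of \cite{ABM1}. Your outline resembles that argument in broad strokes, but your opening reduction contains a genuine error: the claimed dictionary ``$m$ vertical over $S$ $\iff$ $m$ is $S$-torsion'' is false in the analytic category, and only the implication ``torsion $\Rightarrow$ vertical'' survives. Your justification --- that a nowhere-dense image germ in $\Specan S$ ``is cut out by a nonzero ideal of $S$'' --- fails because images of germs under holomorphic maps need not be analytic, and the smallest analytic germ containing a nowhere-dense image can be all of $\Specan S$. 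The classical counterexample is Osgood's map $(x,y)\mapsto(x,\,xy,\,xye^y)$ from $(\C^2,0)$ to $(\C^3,0)$: the induced homomorphism $\C\{u,v,w\}\to\C\{x,y\}$ is injective although the image of every small representative is a nowhere-dense $2$-dimensional set. Taking $S=\C\{u,v,w\}$ (which is regular, so regularity does not rescue your claim) and $N=\C\{x,y\}$ as an analytic $S$-module, the element $m=1$ has $\VV(\Ann(m))=(\C^2,0)$ with nowhere-dense image, hence is vertical, yet no nonzero element of $S$ kills it. This is exactly why the present paper insists on \emph{geometric} vertical elements in the analytic statements (Remark~\ref{rem:well-defined}, Definition~\ref{def:vert-element}) and can only pass to torsion-freeness in the algebraic version (Theorem~\ref{thm:main-alg}), where Chevalley constructibility makes the two notions coincide. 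The error breaks your forward direction: ``flat $\Rightarrow$ the $n$-fold power is torsion-free'' is strictly weaker than the required ``flat $\Rightarrow$ no vertical elements,'' since torsion-free modules can have vertical elements (Osgood's $\C\{x,y\}$ again). The correct forward argument, as in \cite{GK} and \cite{ABM1}, rests on the openness of flat morphisms (Douady), which your proposal never invokes.

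For the converse, what you offer is a program rather than a proof. You correctly identify the crux --- $M$ is only almost finitely generated over $S$, so Auslander's rigidity theorem and the depth formula for finite modules over regular local rings do not apply verbatim --- but you leave both proposed bridges (an analytic analogue of the rigidity machinery, or a reduction to the finite case) entirely unexecuted; building that bridge is the substantive content of \cite{ABM1}. One point in your favour: the reduced implication you aim at, ``$M^{\antens_S^n}$ torsion-free $\Rightarrow$ $M$ flat,'' is indeed the implication \cite{ABM1} establishes (non-flatness is shown to produce an honest torsion element, hence a vertical one, in the $n$-fold power), and combined with the openness argument this closes the cycle of implications making ``no vertical elements'' and ``torsion-free'' equivalent \emph{a posteriori}. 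But that equivalence is a consequence of the theorem, not a legitimate first reduction, and as written your proposal establishes neither implication of the stated equivalence.
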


\subsection{Proof of Theorem~\ref{thm:main}}
\label{subsec:proof-main}

Let $S$ be an arbitrary local analytic $R$-algebra, which is a regular ring of dimension $\dim S=n=\dim R$, and such that the induced morphism $\Specan S\to\Specan R$ is dominant. Let $F$ be an arbitrary analytic $R$-module.

By Proposition~\ref{prop:descent} and since flatness is preserved by base change (\cite[Prop.\,6.8]{H}), it follows that $F$ is a flat $R$-module if and only if $F\tensR S$ is a flat $S$-module. Hence, by Theorem~\ref{thm:main-ABM1}, $F$ is $R$-flat if and only if the $n$-fold tensor power $(F\tensR S)^{\antens_S^n}$ has no vertical elements over $S$. By Lemma~\ref{lem:commutativity}, this is equivalent to saying that $\FpnR\tensR S$ has no vertical elements over $S$.

To complete the proof, it remains to show that the latter is equivalent to the lack of vertical elements in $\FpnR\tensR S$ over $R$. Let then $A$ denote a local analytic $R$-algebra for which $F$ is a finitely generated $A$-module, and let $\vpxi:\Xxi\to\Yeta$ denote a morphism of complex-analytic space germs such that $R=\OYeta$, $A=\OXxi$,
and $\vpxi^*:\OYeta\to\OXxi$ gives the $R$-algebra structure of $A$. Further, let $\sigma_\zeta:\Zzeta\to\Yeta$ denote a morphism of space germs such that $S=\OO_{Z,\zeta}$ and $\sigma_\zeta^*:\OYeta\to\OO_{Z,\zeta}$ gives the $R$-algebra structure of $S$. Consider the following fibred product square
\[
\begin{CD}
\Xpn\times_YZ    @>>>     \Xpn \\
@V{\lambda}VV          @VV{\vpn}V \\
Z    @>{\sigma}>>     Y\ . 
\end{CD}
\]
Then, by Definition~\ref{def:vert-element}, $\FpnR\tensR S$ has a vertical element over $R$ if and only if there is a non-zero $m\in\FpnR\tensR S$ such that the zero-set germ $\VV(\Ann_{\OO_{\Xpn\times_YZ,(\xipn,\zeta)}}(m))$ is mapped by $(\sigma\circ\lambda)_{(\xipn,\zeta)}$ to a nowhere-dense subgerm of $Y_{\eta}$. But $\sigma_\zeta$ is a dominant morphism of irreducible germs of the same dimension, and hence $\sigma_\zeta(W_\zeta)$ is nowhere-dense if and only if $W_\zeta$ is so. Therefore verticality of $m$ over $R$ is equivalent to saying that $\lambda_{(\xipn,\zeta)}$ maps $\VV(\Ann_{\OO_{\Xpn\times_YZ,(\xipn,\zeta)}}(m))$ to a nowhere-dense subgerm of $Z_\zeta$; i.e., $m$ is vertical over $S$.
\qed

\section{Algebraic case}
\label{sec:alg}

The following is an algebraic version of our flatness criterion.

\begin{theorem}
\label{thm:main-alg}
Let $R$ be an $n$-dimensional finite type $\C$-algebra, which is locally analytically irreducible.
Let $A$ denote an $R$-algebra essentially of finite type, and let $F$ denote a finitely generated $A$-module. Let $S$ be any $n$-dimensional regular $R$-algebra of finite type such that the induced morphism $\Spec S\to\Spec R$ is dominant. Then $F$ is $R$-flat if and only if the tensor product
\[
\underbrace{F\otimes_R\dots\otimes_RF}_{n\ \mathrm{times}}\otimes_RS
\]
is a torsion-free $R$-module (equivalently, a torsion-free $S$-module).
\end{theorem}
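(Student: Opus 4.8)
The plan is to derive Theorem~\ref{thm:main-alg} from the analytic criterion of Theorem~\ref{thm:main} by analytification. Since flatness and torsion-freeness are local properties, I would first fix a maximal ideal $\mathfrak{q}$ of $A$ and reduce to a statement at $\mathfrak{q}$; put $\mathfrak{m}=\mathfrak{q}\cap R$, $\mathfrak{n}=\mathfrak{q}\cap S$, and pass to the corresponding germs, writing $R^{\mathrm{an}}=\OYeta$, $A^{\mathrm{an}}=\OXxi$, $S^{\mathrm{an}}=\OO_{Z,\zeta}$ and $F^{\mathrm{an}}=F_\mathfrak{q}\otimes_{A_\mathfrak{q}}\OXxi$. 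The hypothesis that $R$ is locally analytically irreducible of dimension $n$ is exactly what guarantees that $R^{\mathrm{an}}$ is an $n$-dimensional local analytic domain, so that Theorem~\ref{thm:main} applies; moreover, since $S$ is regular and $\Spec S\to\Spec R$ is dominant (each local ring $S_\mathfrak{n}$ being a regular, hence irreducible, domain), the induced morphism of germs is dominant, with $S^{\mathrm{an}}$ regular of dimension $n$.

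The two structural inputs I would invoke are standard. First, analytification is faithfully flat, so by the local criterion for flatness $F$ is $R$-flat at $\mathfrak{q}$ if and only if $F^{\mathrm{an}}$ is $R^{\mathrm{an}}$-flat. Second, because analytification carries the algebraic fibred product to the fibred product of germs, whose local ring is the analytic tensor product, the algebraic construction analytifies to the analytic one:
\[
\bigl(F^{\otimes^n_R}\otimes_R S\bigr)^{\mathrm{an}}\;\cong\;\underbrace{F^{\mathrm{an}}\antens_{R^{\mathrm{an}}}\cdots\antens_{R^{\mathrm{an}}}F^{\mathrm{an}}}_{n}\antens_{R^{\mathrm{an}}}S^{\mathrm{an}}\,.
\]
Applying Theorem~\ref{thm:main} to $F^{\mathrm{an}}$ then reduces everything to one assertion: that the module $G:=F^{\otimes^n_R}\otimes_R S$ is torsion-free over $R$ (equivalently over $S$) if and only if its analytification has no vertical elements over $R^{\mathrm{an}}$ (equivalently over $S^{\mathrm{an}}$).

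To bridge the two languages I would set up a dictionary between verticality and torsion over a domain. Let $A'$ denote the finite-type algebra over which $G$ is a finite module. For $m\in G$, the closure of the image of $\VV(\Ann_{A'}(m))$ in $\Spec R$ is cut out by $\Ann_{R}(m)=\Ann_{A'}(m)\cap R$, so over the local domain $R_\mathfrak{m}$ this image is nowhere-dense precisely when $\Ann_R(m)\neq 0$, i.e. when $m$ is a torsion element. The identical computation in the analytic category---now using that the smallest subgerm of $\Specan R^{\mathrm{an}}$ containing the image of $\VV(\Ann_{(A')^{\mathrm{an}}}(m))$ is defined by $\Ann_{R^{\mathrm{an}}}(m)$---shows that $m$ is vertical over $R^{\mathrm{an}}$ if and only if it is torsion over $R^{\mathrm{an}}$. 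Thus ``no vertical elements'' becomes ``torsion-free'' on both sides, and only the descent of torsion-freeness along the faithfully flat maps $R_\mathfrak{m}\to R^{\mathrm{an}}$ and $S_\mathfrak{n}\to S^{\mathrm{an}}$ remains.

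I expect this descent to be the main obstacle, the delicate direction being the \emph{ascent} of torsion-freeness, where the growth of the fraction field under analytification could a priori create torsion. I would resolve it using that a finitely generated torsion-free module over a Noetherian domain embeds in a finite free module: an embedding $G_\mathfrak{m}\hookrightarrow R_\mathfrak{m}^{\,s}$ remains injective after the flat base change to $R^{\mathrm{an}}$, giving $G^{\mathrm{an}}\hookrightarrow (R^{\mathrm{an}})^{s}$, so that $G^{\mathrm{an}}$ is torsion-free over the \emph{domain} $R^{\mathrm{an}}$; the reverse implication is immediate from faithful flatness and the injectivity of $R_\mathfrak{m}\to R^{\mathrm{an}}$. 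It is precisely here that local analytic irreducibility is indispensable, since it is what makes $R^{\mathrm{an}}$ a domain and hence keeps the ambient free module torsion-free. Running the same argument over $S^{\mathrm{an}}$ accounts for the parenthetical equivalence with torsion-freeness over $S$, and assembling the equivalences at every $\mathfrak{q}$ completes the reduction to Theorem~\ref{thm:main}.
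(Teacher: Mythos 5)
Your overall architecture---localize at a point, analytify, apply Theorem~\ref{thm:main}, translate verticality into torsion over the irreducible germ, and transfer torsion-freeness between $R$ and $R^{\mathrm{an}}$---is exactly the route the paper takes (it defers the details to the algebraic variant in [ABM1], describing precisely this reduction: analytification, faithful flatness, and the identification of vertical elements with torsion elements). The flatness transfer, the compatibility $(F^{\otimes^n_R}\otimes_RS)^{\mathrm{an}}\cong (F^{\mathrm{an}})^{\antens^n_{R^{\mathrm{an}}}}\antens_{R^{\mathrm{an}}}S^{\mathrm{an}}$, the verticality/torsion dictionary over a domain, and the easy descent direction are all fine. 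The gap is in the one step that carries the real content: the \emph{ascent} of torsion-freeness under analytification. Your argument there fails for two concrete reasons. First, $G:=F^{\otimes^n_R}\otimes_RS$ is finitely generated over $A':=A^{\otimes^n_R}\otimes_RS$, which is of finite type over $R$ but not finite over $R$; so $G_{\mathfrak{m}}$ is not a finitely generated $R_{\mathfrak{m}}$-module and in general admits no embedding into $R_{\mathfrak{m}}^{\,s}$ (already $G=R[x]$ is torsion-free over $R$ and embeds in no finite free $R$-module). The fact you invoke applies only to modules finite over the domain itself. Second, analytification is the base change $(-)\otimes_{A'_{\mathfrak{q}}}A'^{\mathrm{an}}$, not $(-)\otimes_{R_{\mathfrak{m}}}R^{\mathrm{an}}$; flat base-change arguments of this kind (for instance tensoring the inclusion of $G$ into its localization at $R\setminus\{0\}$ with $A'^{\mathrm{an}}$) only yield that $G^{\mathrm{an}}$ is torsion-free over $R_{\mathfrak{m}}$, whereas Theorem~\ref{thm:main} requires torsion-freeness over $R^{\mathrm{an}}$, a strictly stronger condition: $R^{\mathrm{an}}$ has many nonzero primes contracting to $(0)$ in $R$ (ideals of transcendental analytic hypersurfaces through the point), so $\mathfrak{P}\cap R=(0)$ never formally gives $\mathfrak{P}\cap R^{\mathrm{an}}=(0)$.

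What the step actually requires is a geometric lemma: for every associated prime $\pp$ of $G$ with $\pp\cap R=(0)$ (so that $\VV(\pp)$ dominates $\Spec R$), every \emph{analytic} irreducible component of $\VV(\pp)$ at the chosen point must still dominate the germ $\Specan R^{\mathrm{an}}$, i.e., its ideal must contract to $(0)$ in $R^{\mathrm{an}}$. This is where local analytic irreducibility does its real work---not, as you suggest, in keeping an ambient free module torsion-free. Without it the statement is simply false: take $R$ the local ring of the nodal cubic $y_2^2=y_1^2(1+y_1)$ at the origin and $G=A'$ its normalization; then $G$ is $R$-torsion-free, yet at either point over the node $G^{\mathrm{an}}\cong\C\{t\}$ is annihilated by the nonzero ideal in $R^{\mathrm{an}}$ of one analytic branch, so $G^{\mathrm{an}}$ has $R^{\mathrm{an}}$-torsion (equivalently, since here the branch is a whole component of the germ rather than a nowhere-dense subgerm, torsion over $R^{\mathrm{an}}$ no longer coincides with verticality). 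With irreducibility the lemma is true, but its proof needs geometry rather than flatness formalism: the analytic branches of $\VV(\pp)$ at a closed point are all of dimension $\dim\VV(\pp)$ (excellence), hence none can lie in the proper subvariety where the generic rank of $\VV(\pp)\to\Spec R$ drops, so the image of each branch contains $n$-dimensional manifold germs and is therefore somewhere dense in the irreducible $n$-dimensional germ $\Specan R^{\mathrm{an}}$, forcing the contracted ideal to vanish. Some argument of this type (this is what [ABM1] supplies) must replace your embedding step; without it the implication ``$G$ torsion-free $\Rightarrow F$ flat'' is unproven.
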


Here, by a \emph{locally analytically irreducible} algebra we mean a finite type $\C$-algebra $R$ such that the complex-analytic space canonically associated with $\Spec R$ is locally irreducible. This is the case, for example, if $R$ is normal. An \emph{$R$-algebra essentially of finite type} means a localization of a finite type $R$-algebra.
\medskip

By analogy with Corollary~\ref{cor:X-smooth}, if $F=A$ is itself regular $n$-dimensional, we get the following:

\begin{corollary}
\label{cor:A-regular}
Let $R$ be an $n$-dimensional finite type $\C$-algebra, which is locally analytically irreducible, and let $A$ be a regular $n$-dimensional $R$-algebra of finite type such that the induced morphism $\Spec A\to\Spec R$ is dominant. Then $A$ is $R$-flat if and only if the $(n+1)$-fold tensor power $A^{\otimes^{n+1}_R}$ is a torsion-free $R$-module.
\end{corollary}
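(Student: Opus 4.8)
The plan is to deduce Corollary~\ref{cor:A-regular} as an immediate specialization of Theorem~\ref{thm:main-alg}, in exact parallel with the way Corollary~\ref{cor:X-smooth} is read off from Theorem~\ref{thm:1}. The guiding idea is to feed the single algebra $A$ into the theorem in two roles simultaneously: once as the finitely generated module $F$, and once as the auxiliary regular algebra $S$. Concretely, I would set $F := A$, viewed as a finitely generated module over itself (generated by $1$), and $S := A$.

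The first step is to verify that these choices satisfy the hypotheses of Theorem~\ref{thm:main-alg}. For the module $F$: the corollary assumes $A$ is of finite type over $R$, hence in particular essentially of finite type, and $A$ is trivially a finitely generated $A$-module. For the auxiliary algebra $S$: the corollary assumes precisely that $A$ is an $n$-dimensional regular $R$-algebra of finite type with $\Spec A\to\Spec R$ dominant, which are exactly the conditions imposed on $S$ in the theorem. The ambient hypotheses on $R$ (an $n$-dimensional, locally analytically irreducible finite type $\C$-algebra) are inherited verbatim.

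The second step is the tensor identity. With these substitutions,
\[
\underbrace{F\otimes_R\dots\otimes_RF}_{n\ \mathrm{times}}\otimes_RS
= A^{\otimes^n_R}\otimes_RA = A^{\otimes^{n+1}_R}\,.
\]
Theorem~\ref{thm:main-alg} then states that $F=A$ is $R$-flat if and only if $F^{\otimes^n_R}\otimes_RS$ is torsion-free as an $R$-module (equivalently as an $S$-module); combining this with the displayed identity yields exactly the assertion that $A$ is $R$-flat if and only if $A^{\otimes^{n+1}_R}$ is a torsion-free $R$-module.

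I expect no genuine obstacle here: all the content resides in Theorem~\ref{thm:main-alg}, and the corollary is a pure substitution. The only point deserving a moment's attention is that the theorem imposes no compatibility constraint tying $S$ to the algebra over which $F$ is finitely generated, so deploying the same ring $A$ in both roles is legitimate. Once this is granted, the argument collapses to the one-line computation above, mirroring the brevity of the proof of Corollary~\ref{cor:X-smooth}.
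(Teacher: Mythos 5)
Your proposal is correct and is exactly the paper's intended derivation: the paper obtains this corollary from Theorem~\ref{thm:main-alg} by the same substitution $F:=A$, $S:=A$, mirroring how Corollary~\ref{cor:X-smooth} follows from Theorem~\ref{thm:1} by taking $\Zzeta:=\Xxi$ and $\sigma_\zeta:=\vpxi$. The hypothesis checks and the identity $A^{\otimes^n_R}\otimes_R A = A^{\otimes^{n+1}_R}$ are precisely the content needed, so nothing is missing.
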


\begin{remark}
\label{rem:effective}
By Theorem~\ref{thm:main-alg}, in order to verify that $F$ is not $R$-flat, it suffices to find an associated prime $\pp$ of $F^{\otimes^n_R}\otimes_RS$ in $A^{\otimes^n_R}\otimes_RS$ for which $\pp\cap R\neq(0)$. Thus our criterion paired with computer algorithms for primary decomposition and desingularization (see, e.g., \cite{GP}) provides a tool for checking flatness by effective computation, over an arbitrary complex affine locally analytically irreducible domain.

On the other hand, the bound $n=\dim R$ is sharp (see Remark~\ref{rem:generalization} and Example~\ref{ex:sharp}).
\end{remark}

The proof of Theorem~\ref{thm:main-alg} is virtually identical with that of \cite[Thm.\,1.3]{ABM1} (an algebraic variant of the flatness criterion of \cite{ABM1}).
One reduces to the analytic case by considering the complex-analytic spaces canonically associated with the spectra of the given rings, and then uses standard faithfull flatness arguments and the fact that vertical elements are precisely the zero-divisors in the algebraic case. For details we refer the reader to \cite{ABM1}.

\begin{remark}
\label{rem:char-zero}
It is worth pointing out that, by the Tarski-Lefschetz Principle, one can generalize Theorem~\ref{thm:main-alg} by replacing $\C$ with any field of characteristic zero (see~\cite{ABM2} for details).
In this case, ``analytically irreducible'' needs to be replaced with ``geometrically unibranch'' (cf.~\cite{Gro}).
\end{remark}
\medskip

We conclude with an explicit calculation showing Theorem~\ref{thm:main-alg} at work.
This example proves as well that, in general, over a non-regular $R$, tensoring with $S$ is necessary for detecting non-flatness of an $R$-module $F$.

\begin{example}
\label{ex:sharp}
The following example of a non-flat module is due to Douady (see, e.g., \cite[\S\,3.13]{F}).
Let
\[
R:=\frac{\C[y_1,y_2]}{(4y_1^3+27y_2^2)}\,,
\]
and
\[
F:=\frac{\C[y_1,y_2,x]}{\sqrt{(4y_1^3+27y_2^2, x^3+y_1x+y_2)}}\,,
\]
where $\sqrt{I}$ denotes the radical of $I$. We will use Theorem~\ref{thm:main-alg} to verify that $F$ is a non-flat $R$-module.

Consider the (locally analytically irreducible) plane curve defined by $R$, and let $S=\C[u]$ denote the coordinate ring of its normalization. Then $S$ is a regular $R$-algebra, of dimension $\dim S=1=\dim R$, and the mapping $\Spec S\to\Spec R$ is dominant. Now, since $\dim R=1$, we need to look for $R$-torsion in $F\otimes_RS$. Note that, as an $R$-module, $S\cong\C[y_1,y_2,u]/(y_1+3u^2, y_2-2u^3)$, and hence
\[
F\otimes_RS\ \cong\ \frac{\C[y_1,y_2,x,u]}{\sqrt{(4y_1^3+27y_2^2, x^3+y_1x+y_2)}+(y_1+3u^2, y_2-2u^3)}\,.
\]
One can use a computer algebra software, like Singular (cf.~\cite{GP}), to verify that the $S$-module $F\otimes_RS$ has an associated prime $\pp$ such that $\pp\cap R=(y_1,y_2)$. Therefore $F\otimes_RS$ has non-zero $R$-torsion, and hence $F$ is not $R$-flat, by Theorem~\ref{thm:main-alg}.
On the other hand, notice that $F$ itself is a torsion-free $R$-module. To see this, observe that $F$ has precisely two associated primes each of which contracts to zero in $R$.

Note also that $S$ itself is not $R$-flat. This can be verified (in light of Theorem~\ref{thm:main-alg}) by looking at the $R$-module structure of $S\otimes_RS$. One can readily see that the element $u-t$ is a zero-divisor in
\[
S\otimes_RS\ \cong\ \frac{\C[y_1,y_2,u,t]}{(y_1+3u^2,y_2-2u^3,y_1+3t^2,y_2-2t^3)}\,.
\]
Therefore $S$ is not $R$-flat, by Theorem~\ref{thm:main-alg} (or Corollary~\ref{cor:X-smooth}). On the other hand, $S$ itself is $R$-torsion-free, which proves sharpness of our flatness criterion.
\end{example}


\bibliographystyle{amsplain}

\end{document}